% ------------------------------------------------------------------------
% bjourdoc.tex for birkjour.cls*******************************************
% ------------------------------------------------------------------------
%%%%%%%%%%%%%%%%%%%%%%%%%%%%%%%%%%%%%%%%%%%%%%%%%%%%%%%%%%%%%%%%%%%%%%%%%%

\documentclass{birkjour}
%
%
% THEOREM Environments (Examples)-----------------------------------------
%
 \newtheorem{thm}{Theorem}[section]
 \newtheorem{cor}[thm]{Corollary}
 \newtheorem{lem}[thm]{Lemma}
 
 \theoremstyle{definition}
 
 \theoremstyle{remark}
 \newtheorem{rem}[thm]{Remark}
 
 \numberwithin{equation}{section}

%%%%%%%%%%%%%%%%%%%%%%%%%%%%%%%%
\newcommand{\CC}{{\mathbb C}}
\renewcommand{\i}{\mathrm{i}}
\newcommand{\bI}{{\bf I}}
\newcommand{\bM}{{\bf M}}
\newcommand{\bN}{{\bf N}}
\renewcommand{\Re}{\mathop{\mathrm{Re}}}
\renewcommand{\Im}{\mathop{\mathrm{Im}}}
\newcommand{\codim}{{\rm codim}}

\newcommand{ \Null}{{\rm Null}}

%%%%%%%%%%%%%%%%%%%%%%%%%%%%%%%%

\begin{document}

%-------------------------------------------------------------------------
% editorial commands: to be inserted by the editorial office
%
%\firstpage{1} \volume{228} \Copyrightyear{2004} \DOI{003-0001}
%
%
%\seriesextra{Just an add-on}
%\seriesextraline{This is the Concrete Title of this Book\br H.E. R and S.T.C. W, Eds.}
%
% for journals:
%
%\firstpage{1}
%\issuenumber{1}
%\Volumeandyear{1 (2004)}
%\Copyrightyear{2004}
%\DOI{003-xxxx-y}
%\Signet
%\commby{inhouse}
%\submitted{March 14, 2003}
%\received{March 16, 2000}
%\revised{June 1, 2000}
%\accepted{July 22, 2000}
%
%
%
%---------------------------------------------------------------------------
%Insert here the title, affiliations and abstract:
%

\title[General conjugation problem]
 {A boundary integral method for the general conjugation problem in multiply connected circle domains}

%----------Author 1
\author[M Nasser]{Mohamed M.S. Nasser}

\address{%
Department of Mathematics, Statistics and Physics\\
Qatar University, P.O.Box 2713, Doha\\
Qatar}

\email{mms.nasser@qu.edu.qa}

\thanks{This paper has been presented in: BFA 3rd meeting, Rzeszow, Poland, April 20--23, 2016. The author is grateful to Qatar University for the financial support to attend the meeting and to professor Piotr Drygas, chairman of the organizing committee of the meeting, for the hospitality during the meeting.}
%----------classification, keywords, date
\subjclass{Primary 30E25; Secondary  45B05}

\keywords{General conjugation problem, Riemann-Hilbert problem,
Generalized Neumann kernel}

\date{\today}
%----------additions
%\dedicatory{To my boss}
%%% ----------------------------------------------------------------------

\begin{abstract}
We present a boundary integral method for solving a certain class of Riemann-Hilbert problems known as the general conjugation problem. The method is based on a uniquely solvable boundary integral equation with the generalized Neumann kernel. We present also an alternative proof for the existence and uniqueness of the solution of the general conjugation problem.
\end{abstract}

%%% ----------------------------------------------------------------------
\maketitle
%%% ----------------------------------------------------------------------
%\tableofcontents

%%--------------------------------------
\section{Introduction}

The Riemann-Hilbert problem (RH-problem, for short) is one of the most important classes of boundary value problems for analytic functions. Indeed, the Dirichlet problem, the Neumann problem, the mixed Dirichlet-Neumann problem, the problems of computing the conformal mapping and the external potential flow can be formulated as RH-problems. 
The RH problem consists of determining of all analytic functions in a domain $G$ in the extended complex plane that satisfy a prescribed boundary condition on the boundary $C=\partial G$.

A boundary integral equation with continuous kernel for solving the RH problem has been derived in~\cite{MurRazNas02,MurNas03}. The Kernel of the derived integral equation is a generalization of the well known \emph{Neumann kernel}. So, the new kernel has been called the \emph{generalized Neumann kernel}. The solvability of the boundary integral equation with the generalized Neumann kernel has been studied for simply connected domains with smooth boundaries in~\cite{WegMurNas05}, for simply connected domains with piecewise smooth boundaries in~\cite{Nas-cvee08}, and for multiply connected domains in~\cite{WegNas08,NasIbb09}. 

It turns out that the solvability of the boundary integral equation with the generalized Neumann kernel depends on the index $\kappa_j$ of the coefficient function $A$ of the RH-problem on each boundary component of the boundary of $G$. However, the solvability of the RH-problem depends on the total index $\kappa$ of the function $A$ on the whole boundary of $G$. This raises a difficulty in using the boundary integral equation with the generalized Neumann kernel to solve the RH-problem in multiply connected domains. Such a difficulty does not appear for the simply connected case since the boundary of $G$ consists of only one component. So far, the boundary integral equation with the generalized Neumann kernel has been used to solve the RH-problem in multiply connected domains for special case of the function $A$ (see~\cite[Eq.~(1.2)]{Nas-etna15}). For such special case, the boundary integral equation with the generalized Neumann kernel has been used successfully to compute the conformal mapping onto more than $40$ canonical domains~\cite{Nas-cmft09,Nas-siam09,Nas-jmaa11,Nas-jmaa13,Nas-cmft15,Nas-siam13,Nas-cmft16} and to solve several boundary value problems such as the Dirichlet problem, the Neumann problem, and the mixed boundary value problem~\cite{Alh13,Nas-jam12,Nas-amc11}.

In this paper, we shall use the boundary integral equation with the generalized Neumann kernel to solve a certain class of RH-problems in multiply connected domains considered by Wegmann~\cite{Weg01r} and known as the \emph{general conjugation problem}. We shall also use the integral equation to provide an alternative proof for the existence and uniqueness of the solution of the general conjugation problem.

%----------------------------------------------------------------
\section{The generalized Neumann kernel}
%----------------------------------------------------------------

Let $G$ be the unbounded multiply connected circular domain obtained by removing $m$ disks $D_1, \ldots, D_m$ from the extended complex plane $\CC\cup\{\infty\}$ such that $\infty\in G$ (see Figure~\ref{f:dom}). The disk $D_j$ is bounded by the circle $C_j=\partial D_j$ with a center $z_j$ and radius $r_j$. We assume that each circle $C_j$ is clockwise oriented and parametrized by 
\[
\eta_j(t)=z_j+r_j e^{-\i t}, \quad t\in J_j:=[0,2\pi], \quad j=1,2,\ldots,m.
\]
Let $J$ be the disjoint union of the $m$ intervals $J_1,\ldots,J_m$ which is defined by 
\begin{equation}\label{e:J}
J = \bigsqcup_{j=1}^{m} J_j=\bigcup_{j=1}^{m}\{(t,j)\;:\;t\in J_j\}.
\end{equation}
The elements of $J$ are order pairs $(t,j)$ where $j$ is an auxiliary index indicating which of the intervals the point $t$ lies in. Thus, the parametrization of the whole boundary $C=\partial D=C_1\cup C_2\cup\cdots\cup C_m$ is defined as the complex function $\eta$ defined on $J$ by
\begin{equation}\label{e:eta-1}
\eta(t,j)=\eta_j(t), \quad t\in J_j,\quad j=1,2,\ldots,m.
\end{equation}
We assume that for a given $t$ that the auxiliary index $j$ is known, so we replace the pair $(t,j)$ in the left-hand side of~(\ref{e:eta-1}) by $t$, i.e., for a given point $t\in J$, we always know the interval $J_j$ that contains $t$. The function $\eta$ in~(\ref{e:eta-1}) is thus simply written as
\begin{equation}\label{e:eta}
\eta(t):= \left\{ \begin{array}{l@{\hspace{0.5cm}}l}
\eta_1(t),&t\in J_1=[0,2\pi],\\
%\eta_2(t),&t\in J_2=[0,2\pi],\\
\hspace{0.3cm}\vdots\\
\eta_m(t),&t\in J_m=[0,2\pi].
\end{array}
\right .
\end{equation}

\begin{figure}%[!h]
\centerline{\scalebox{0.5}[0.5]{
\includegraphics{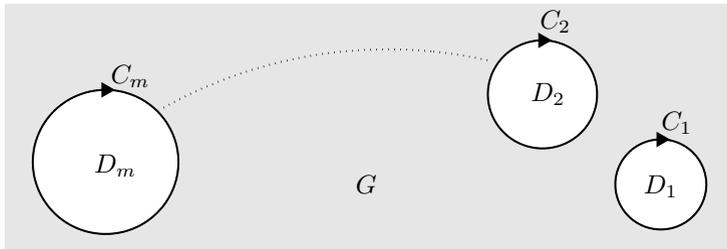}}}
 \vskip-3.4cm \noindent\hspace{+4.8cm} $C_2$
 \vskip+.30cm \noindent\hspace{-6.4cm} $C_m$
 \vskip+.20cm \noindent\hspace{+8.0cm} $C_1$
 \vskip-0.8cm \noindent\hspace{+4.6cm} $D_2$
 \vskip+.80cm \noindent\hspace{+3.9cm}$G$ \hspace{3.4cm}$D_1$
 \vskip-0.7cm \noindent\hspace{-6.8cm} $D_m$
 \vskip+0.8cm
 \caption{\rm An unbounded multiply connected circular domain $G$ of connectivity $m$.}
 \label{f:dom}
\end{figure}

Let $H$ denote the space of all real functions $\gamma$ in J, whose restriction $\gamma_j$ to $J_j=[0,2\pi]$ is a
real-valued, $2\pi$-periodic and H\"older continuous function for each $j=1,\ldots,m$, i.e.,
\[
\gamma(t) = \left\{
\begin{array}{l@{\hspace{0.5cm}}l}
 \gamma_1(t),     & t\in J_1, \\
 %\gamma_2(t),     & t\in J_2, \\
  \vdots,       & \\
 \gamma_m(t),     & t\in J_m. \\
\end{array}%
\right.
\]
In view of the smoothness of the parametrization $\eta$, a real H\"older continuous function $\hat\gamma$ on $C$ can be interpreted via $\gamma(t):=\hat\gamma(\eta(t))$, $t\in J$, as a function $\gamma\in H$; and vice versa. So, in this paper, for any given complex or real valued function $\phi$ defined on $C$, we shall not distinguish between $\phi(t)$ and $\phi(\eta(t))$. Further, for any complex or real valued function $\phi$ defined on $C$, we shall denote the restriction of the function $\phi$ to the boundary $C_j$ by $\phi_j$, i.e., $\phi_j(\eta(t))=\phi(\eta_j(t))$ for each $j=1,\ldots,m$. However, if $\Phi$ is an analytic function in the domain $G$, we shall denote the restriction of its values to the boundary $C_j$ by $\Phi_{|j}$, i.e., $\Phi_{|j}(\eta(t))=\Phi(\eta_j(t))$ for each $j=1,\ldots,m$.

Let $A$ be a continuously differentiable complex function on $C$ with $A\ne0$. The generalized Neumann kernel is defined by (see~\cite{WegNas08} for details)
\begin{equation}\label{e:N} 
N(s,t):=  \frac{1}{\pi}\Im\left(
\frac{A(s)}{A(t)}\frac{\dot\eta(t)}{\eta(t)-\eta(s)}\right).
\end{equation}
When $A=1$, the kernel $N$ is the well-known Neumann kernel which appears frequently in the integral equations of potential theory and conformal mapping (see e.g.~\cite{Hen86}). We define also the following singular kernel $M(s,t)$ which is closely related to the generalized Neumann kernel $N(s,t)$~\cite{WegNas08},
\begin{equation}\label{e:M} 
M(s,t):=  \frac{1}{\pi}\Re\left(
\frac{A(s)}{A(t)}\frac{\dot\eta(t)}{\eta(t)-\eta(s)}\right).
\end{equation}

\begin{lem}[\cite{WegNas08}]
{\rm(a)} The kernel $N(s,t)$ is continuous with
\begin{equation}
\label{e:N-d} N(t,t)= \frac{1}{\pi} \left(\frac{1}{2}\Im
\frac{\ddot\eta(t)}{\dot \eta(t)} -\Im\frac{\dot A(t)}{ A(t)}
\right).
\end{equation}
{\rm(b)} When  $s,t\in J_j$  are in the same parameter interval $J_j$, then
\begin{equation}\label{e:M-tM}
M(s,t)= -\frac{1}{2\pi} \cot \frac{s-t}{2} + M_1(s,t)
\end{equation}
with a continuous kernel $M_1$ which takes on the diagonal the
values
\begin{equation}\label{e:M-tM-d}
M_1(t,t)= \frac{1}{\pi} \left(\frac{1}{2}\Re
\frac{\ddot\eta(t)}{\dot \eta(t)} -\Re \frac{\dot A(t)}{ A(t)}
\right).
\end{equation}
\end{lem}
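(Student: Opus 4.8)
The plan is to reduce both statements to a single local Taylor analysis of the common core function
\[
K(s,t):=\frac{A(s)}{A(t)}\frac{\dot\eta(t)}{\eta(t)-\eta(s)},
\]
of which $N=\frac1\pi\Im K$ and $M=\frac1\pi\Re K$ are the imaginary and real parts. First I would observe that the only possible singularity of $K$ sits on the diagonal $s=t$ within a single parameter interval $J_j$: when $s\in J_j$ and $t\in J_k$ with $j\ne k$ the circles $C_j,C_k$ are disjoint, so $\eta(t)-\eta(s)\ne0$ and $K$ is manifestly smooth there; and for $s,t\in J_j$ with $s\ne t$ the parametrization is injective, so again the denominator does not vanish. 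Thus everything hinges on the behaviour of $K$ as $s\to t$ in the same interval.

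For that local analysis I would expand, using $\eta\in C^2$ and $A\in C^1$ with $A\ne0$,
\[
\eta(t)-\eta(s)=-\dot\eta(t)(s-t)\Bigl(1+\tfrac12\tfrac{\ddot\eta(t)}{\dot\eta(t)}(s-t)+O((s-t)^2)\Bigr),\qquad \frac{A(s)}{A(t)}=1+\frac{\dot A(t)}{A(t)}(s-t)+O((s-t)^2).
\]
Dividing $\dot\eta(t)$ by the first expansion and multiplying by the second yields the key identity
\[
K(s,t)=\frac{-1}{s-t}+\Bigl(\frac12\frac{\ddot\eta(t)}{\dot\eta(t)}-\frac{\dot A(t)}{A(t)}\Bigr)+O(s-t),
\]
in which the entire singularity is carried by the \emph{real} term $-1/(s-t)$, while the bracketed constant is what survives on the diagonal.

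Part (a) then follows at once: since $-1/(s-t)$ is real it contributes nothing to $\Im K$, so $N=\frac1\pi\Im K$ extends continuously across the diagonal, with $N(t,t)=\frac1\pi\Im\bigl(\frac12\frac{\ddot\eta(t)}{\dot\eta(t)}-\frac{\dot A(t)}{A(t)}\bigr)$, which is exactly~(\ref{e:N-d}). For part (b) the real part $M=\frac1\pi\Re K$ inherits the singular term $-1/(\pi(s-t))$, and I would match it against the periodic Hilbert kernel via the Laurent expansion $\cot\frac{s-t}{2}=\frac{2}{s-t}-\frac{s-t}{6}+O((s-t)^3)$, so that $-\frac{1}{2\pi}\cot\frac{s-t}{2}=\frac{-1}{\pi(s-t)}+O(s-t)$. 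Consequently $M_1:=M+\frac{1}{2\pi}\cot\frac{s-t}{2}$ has its poles cancel and extends continuously, with $M_1(t,t)=\frac1\pi\Re\bigl(\frac12\frac{\ddot\eta(t)}{\dot\eta(t)}-\frac{\dot A(t)}{A(t)}\bigr)$, giving~(\ref{e:M-tM}) and~(\ref{e:M-tM-d}).

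The main obstacle I anticipate is not the leading-order bookkeeping but controlling the remainders uniformly: one must verify that the $O(s-t)$ terms, and hence $M_1$, are genuinely continuous functions of $(s,t)$ up to and across the diagonal rather than admitting merely a pointwise limit, which is precisely what the smoothness hypotheses $\eta\in C^2$, $A\in C^1$, $A\ne0$ are there to guarantee. A secondary, conceptual point worth flagging is that the singular part must be written with $\cot\frac{s-t}{2}$ rather than $1/(s-t)$ because the underlying functions live on a circle and are $2\pi$-periodic; only the cotangent kernel respects that periodicity while sharing the same leading singularity.
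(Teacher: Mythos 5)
Your proof is correct and coincides with the approach of the cited source: the paper quotes this lemma from \cite{WegNas08} without reproving it, and the proof there (going back to \cite{WegMurNas05}) rests on exactly your diagonal Taylor expansion of $\frac{A(s)}{A(t)}\frac{\dot\eta(t)}{\eta(t)-\eta(s)}$, with the purely real pole $-\frac{1}{s-t}$ dropping out of the imaginary part and being absorbed into the cotangent kernel in the real part. One small remark: since $A$ is only assumed continuously differentiable, your remainder is $o(1)$ rather than $O(s-t)$, but it is uniform in $(s,t)$ by uniform continuity of $\dot A$ on the compact parameter intervals, which is all that continuity of $N$ and $M_1$ requires.
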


On $H$ we define the Fredholm operator
\[
\bN\gamma=\int_JN(s,t)\gamma(t)dt
\]
and the singular operator
\[
\bM\gamma=\int_JM(s,t)\gamma(t)dt.
\]
Both operators $\bN$ and $\bM$ are bounded on the space $H$ and map $H$ into itself. For more details, see~\cite{WegNas08}. The identity operator on $H$ is denoted by $\bI$.

%----------------------------------------------------------------
\section{The Riemann-Hilbert problem}
%----------------------------------------------------------------

The RH-problem for the unbounded multiply connected domain $G$ is defined as follows:\\
For a given function $\gamma\in H$, search a function $\Psi$ analytic in $G$ and continuous on the closure
$\overline G$ with $\Psi(\infty)=0$ such that  the boundary values of $\Psi$ satisfy on $C$ the boundary condition
\begin{equation}\label{eq:RHp}
\Re [A \Psi]=\gamma.
\end{equation}

The boundary condition in~(\ref{eq:RHp}) is non-homogeneous. When $\gamma\equiv0$, we have the homogeneous boundary condition 
\begin{equation}\label{eq:RHp-h}
\Re [A \Psi]=0.
\end{equation}

The solvability of the RH problem depends upon the {\it index} of the function $A$ on the boundary $C$. The index $\kappa_j$ of the function $A$ on the circle $C_j$ is the change of the argument of $ A$ along the circle $C_j$ divided by $2\pi$. The index $\kappa$ of the function $ A$ on the whole boundary curve $C$ is the sum 
\begin{equation}\label{eq:ind}
\kappa=\sum_{j=1}^m  \kappa_j.
\end{equation}

\begin{rem}
Vekua~\cite[Eq.~(1.2), p. 222]{Vek92}, Gakhov~\cite[Eq.~(27.1), p. 208]{Gak66} and Mityushev~\cite[Eq.~(38.3), p. 601]{Mit11} define the RH-problem with $\Re[\overline{A}\Psi]=\gamma$, i.e., with the complex conjugate of the function $A$. This has the consequence that in some of the later results the index of the function $A$ occurs with the opposite sign
as in~\cite{Vek92,Gak66,Mit11}.
\end{rem}

We follow~\cite{WegNas08} and define the space $R^+$, the spaces of functions $\gamma$ for which the RH problem~\eqref{eq:RHp} have solution, by
\begin{equation}\label{eq:R+} 
R^+ := \{ \gamma \in H: \gamma =\Re [A\Psi]\, \mbox{ on $C$},\, \Psi \mbox{ analytic in $G$, $\Psi(\infty)=0$}\}.
\end{equation}
We define also the space $S^+$ to be the space of the boundary values of solutions of the homogeneous RH problem, i.e.,
\begin{equation}\label{e:S+} 
S^+ := \{ \gamma \in H :\gamma=A\Psi\, \mbox{ on $C$},\, \Psi \mbox{ analytic in $G$, $\Psi(\infty)=0$}\}.
\end{equation}

To study the solvability of the RH-problem~\eqref{eq:RHp}, we define the following boundary value problem as the homogeneous exterior RH problem on $G^-=D_1\cup D_2\cup\cdots\cup D_m$:\\
Search a function $g$ analytic in $G^-$ and continuous on the closure $\overline {G^-}$ such that the boundary values of $g$ satisfy on $C$,
\begin{equation}\label{eq:eRHp} 
\Re [Ag]=0.
\end{equation}

It is clear that $G^-$ is not a domain. In fact, it is the union of $m$ disjoint disks. So, solving the homogeneous exterior RH problem~(\ref{eq:eRHp}) is equivalent to solving $m$ RH problems in the disks $D_1, D_2, \ldots, D_m$. The space of the boundary values of solutions of the homogeneous exterior RH problem~(\ref{eq:eRHp}) is denoted by $S^-$, i.e.,
\begin{equation}\label{eq:S-} 
S^- := \{ \gamma \in H:\gamma=Ag\, \mbox{ on $C$},\, g \mbox{ analytic in $G^-$}\}.
\end{equation}
For $j=1,2,\ldots,m$, let $S^-_j$ be the subspace of $S^−$ of real functions $\gamma\in S^-$ such that
\[
\gamma(t)= \left\{ \begin{array}{l@{\hspace{0.5cm}}l}
0,&t\in J_k, \; k\ne j, \; k=1,2,\ldots,m,\\
A_j(t)g(\eta_j(t)),&t\in J_j,
\end{array}
\right .
\]
where $g$ is analytic in the disk $D_j$, i.e., $g$ is a solution of the following homogenous RH-problem on the disk $D_j$,
\begin{equation}\label{eq:g-Dj}
\Im[A_jg]=0 \quad {\rm on}\quad C_j.
\end{equation}
The problem~\eqref{eq:g-Dj} is a RH-problem in the bounded simply connected domain $D_j$ and the index of the function $A_j$ on $C_j=\partial D_j$ is $\kappa_j$. Then we have from~\cite[Eq.~(29)]{WegMurNas05}
\begin{equation}\label{eq:sim-S-j}
\dim(S_j^-)=2\kappa_j+1.
\end{equation}
(note that the orientation of the circles $C_j$ is clockwise which changes the sign in ~\cite[Eq.~(29)]{WegMurNas05}).
Then, we have the following lemmas from~\cite{WegNas08}.
\begin{lem}\label{L:S-Sk}
The space $S^-$ is the direct sum of the subspaces $S_1^-,S_2^-,\ldots,S_m^-$,
\begin{equation}\label{eq:S-Sk}
S^-=S_1^-\oplus S_2^-\oplus \cdots \oplus S_m^-.
\end{equation}
\end{lem}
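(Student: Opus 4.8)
The plan is to exploit that $G^-=D_1\cup\cdots\cup D_m$ is a \emph{disjoint} union of disks: an analytic function on $G^-$ is merely an unrelated tuple of analytic functions on the individual disks, and the boundary condition $\Re[Ag]=0$ on $C=\bigcup_j C_j$ decouples into $m$ independent conditions, one on each $C_j$. The two things to check for a direct-sum decomposition are that $S^-=S_1^-+\cdots+S_m^-$ and that this sum is direct; I would treat these in turn.

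For the directness, the support structure suffices. By definition every $\gamma\in S_j^-$ vanishes on $J_k$ for $k\ne j$, so functions in $S_j^-$ are supported on $J_j$ while functions in $\sum_{k\ne j}S_k^-$ are supported on $\bigcup_{k\ne j}J_k$. Hence $S_j^-\cap\sum_{k\ne j}S_k^-=\{0\}$: if $\gamma_1+\cdots+\gamma_m=0$ with $\gamma_j\in S_j^-$, then restricting the identity to an interval $J_k$ kills every term but $\gamma_k$, forcing $\gamma_k\equiv0$ for each $k$.

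For the spanning, take $\gamma\in S^-$ and write $\gamma=Ag$ on $C$ with $g$ analytic in $G^-$. Since the disks are pairwise disjoint, $g$ splits as $g_j:=g|_{D_j}$, each analytic in $D_j$, and on $C_j$ the trace of $g$ equals that of $g_j$. For each $j$ let $g^{(j)}$ be the function on $G^-$ equal to $g_j$ on $D_j$ and to $0$ on every other disk; this is again analytic on $G^-$, and the associated function $\gamma^{(j)}:=Ag^{(j)}$ equals $A_jg_j$ on $J_j$ and $0$ elsewhere. Restricting $\Re[Ag]=0$ to $C_j$ shows that $g_j$, hence $g^{(j)}$, solves the homogeneous single-disk problem entering the definition of $S_j^-$, so $\gamma^{(j)}\in S_j^-$; and by construction $\gamma=\sum_{j=1}^m\gamma^{(j)}$. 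Together with the directness this yields $S^-=S_1^-\oplus\cdots\oplus S_m^-$.

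I do not anticipate a genuine obstacle here: the statement is in essence a reformulation of the disjointness of the components of $G^-$. The only step requiring care is the localization bookkeeping---confirming that the global condition $\Re[Ag]=0$ restricts on each component to exactly the single-disk condition used to define $S_j^-$ (matching the $\Re$/$\Im$ convention and the clockwise orientation of $C_j$ that enters~\eqref{eq:sim-S-j}). Once this localization is checked, both the spanning and the directness are automatic.
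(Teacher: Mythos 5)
Your proof is correct, but note that the paper itself contains no proof of this lemma to compare against: it is quoted verbatim from~\cite{WegNas08} (``Then, we have the following lemmas from~\cite{WegNas08}''), so you have supplied the missing argument. Your two-step scheme---directness from the support structure (a function in $S_j^-$ vanishes on every $J_k$ with $k\ne j$, so restricting a vanishing sum to $J_k$ isolates and kills $\gamma_k$), and spanning by splitting $g$ into its restrictions $g_j$ and extending each by zero to the other disks---is the natural argument and is, in essence, what the cited source does; the decomposition really is just the disjointness of the components of $G^-$. The one point you flagged but left unresolved should be settled explicitly, because the paper's conventions are internally inconsistent on it: the exterior homogeneous problem is stated as $\Re[Ag]=0$ in~\eqref{eq:eRHp}, whereas $S_j^-$ is defined through the condition $\Im[A_jg]=0$ of~\eqref{eq:g-Dj}. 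The resolution lies in the definition~\eqref{eq:S-}: there $\gamma\in H$ is \emph{real-valued} and $\gamma=Ag$ on $C$, so $\Im[Ag]=0$ is forced automatically, and restricting to $C_j$ yields exactly~\eqref{eq:g-Dj}. In other words, in your spanning step you should not invoke $\Re[Ag]=0$ at all (taken literally with $\gamma=Ag$ real it would force $\gamma\equiv0$); the reality of $\gamma=Ag$ on each $J_j$ is what certifies $\gamma^{(j)}\in S_j^-$. With that bookkeeping made explicit, your argument is complete and self-contained.
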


The space $S^-$ plays a very important rule in using the boundary integral equation with the generalize Neumann kernel to solve the RH-problem~\eqref{eq:RHp} especially when the problem is not solvable since it allows us to find the form of conditions that we should impose on $\gamma$ to make the problem solvable. For simply connected domains, it was proved in~\cite[Corollary~3]{WegMurNas05} that the space $H$ has direct sum decomposition
\begin{equation}\label{eq:H=R+S}
H=R^+\oplus S^-.
\end{equation}
The decomposition~\eqref{eq:H=R+S} means that if the RH-problem $\Re[A\Psi]=\gamma$ is not solvable, then there exists a unique function $h\in S^-$ such that the RH-problem $\Re[A\Psi]=\gamma+h$ is solvable. For simply connected domains, the decomposition~\eqref{eq:H=R+S} is valid for general index $\kappa$ of the function $A$. 
However, for multiply connected domains, the decomposition~\eqref{eq:H=R+S} in general is not correct since we may have $R^+\cap S^-\ne\{0\}$ (see \cite[\S10]{WegNas08}.) 

In this paper, we shall consider special case of the function $A$ for which we can prove that the decomposition~\eqref{eq:H=R+S} is valid for multiply connected domains (see Theorem~\ref{T:H=R+S} below), namely, we assume the index of the function $A$ satisfies
\begin{equation}\label{eq:kappa-j}
\kappa_j\ge0\quad{\rm for all}\quad j=1,2,\ldots,m, 
\end{equation}
which implies that $\kappa\ge0$. RH-problem with such special case of the index has wide applications. For example, our assumption~\eqref{eq:kappa-j} on the index are satisfied for the RH-problem used in~\cite{Nas-cmft09,Nas-siam09,Nas-jmaa11,Nas-jmaa13,Nas-cmft15,Nas-siam13,Nas-cmft16} to develop a method for computing the conformal mapping onto more than $40$ canonical domains and for the RH-problems studied in~\cite{Mit11,Mit12,Weg01r}. Furthermore, many other boundary value problems such as the Dirichlet problem, the Neumann problem, the mixed boundary value problem, and the Schwarz problem can be reduced to RH-problems whose indexes satisfy our assumption~\eqref{eq:kappa-j} (see e.g.,~\cite{Alh13,Nas-jam12,Nas-amc11}).

Under the above assumption~\eqref{eq:kappa-j} on the index, we have the following theorem from~\cite{WegNas08,NasIbb09}.

\begin{thm}\label{T:RHp-sol}
For $\kappa\ge0$, we have
\begin{equation}\label{eq:dim-RS+}
\codim(R^+)=2\kappa+m, \quad \dim(S^+)=0.
\end{equation}
\end{thm}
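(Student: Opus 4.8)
The plan is to prove the two assertions by different means: $\dim(S^+)=0$ by the argument principle, and $\codim(R^+)=2\kappa+m$ by identifying the solvability conditions of the RH-problem~\eqref{eq:RHp} with the space $S^-$, whose dimension is already accessible.

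First I would dispose of $\dim(S^+)=0$. Let $\Psi$ be analytic in $G$ with $\Psi(\infty)=0$ and $\Re[A\Psi]=0$ on $C$, and suppose $\Psi\not\equiv0$. On each circle $C_j$ the values $A\Psi$ lie on the imaginary axis; where they are nonzero the continuous curve $t\mapsto A(\eta_j(t))\Psi(\eta_j(t))$ is confined to one of the two open half-axes, so that (in the absence of boundary zeros) $\Delta_{C_j}\arg(A\Psi)=0$. Writing $\arg(A\Psi)=\arg A+\arg\Psi$ and using $\kappa_j=\frac{1}{2\pi}\Delta_{C_j}\arg A$ for the given clockwise orientation, this gives $\Delta_{C_j}\arg\Psi=-2\pi\kappa_j$. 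Summing over $j$ and applying the argument principle to $\Psi$ on the sphere --- $\Psi$ has no poles in $G$, and $C=\partial G$ with $G$ on the left is exactly the clockwise orientation --- shows that the number $Z_\Psi$ of zeros of $\Psi$ in $G$ equals $-\kappa$. Since $\kappa\ge0$ we get $Z_\Psi\le0$, contradicting $Z_\Psi\ge1$ coming from the zero of $\Psi$ at $\infty$. Hence $\Psi\equiv0$ and $\dim(S^+)=0$. Possible boundary zeros of $\Psi$ are the only technical point and are handled by a standard limiting argument, which can only raise $Z_\Psi$ and so does not affect the contradiction.

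For the codimension, I would first note from Lemma~\ref{L:S-Sk} and $\dim(S_j^-)=2\kappa_j+1$ in~\eqref{eq:sim-S-j} that $\dim(S^-)=\sum_{j=1}^m(2\kappa_j+1)=2\kappa+m$, so it suffices to prove the reciprocity $\codim(R^+)=\dim(S^-)$. The route is to recast~\eqref{eq:RHp} as the boundary integral equation $(\bI-\bN)\mu=-\bM\gamma$ with the generalized Neumann kernel, in which the conjugate function $\mu=\Im[A\Psi]$ is the unknown. Since $N$ is continuous the operator $\bN$ is Fredholm, and the Fredholm alternative identifies $\Range(\bI-\bN)$ with the annihilator of $\Null(\bI-\bN^*)$ for the pairing $\langle\gamma,\sigma\rangle=\int_J\gamma\,\sigma\,dt$. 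The structural input, which I would take from the generalized-Neumann-kernel calculus of~\cite{WegNas08}, is that the transposed kernel $N^*(s,t)=N(t,s)$ is the generalized Neumann kernel of the complementary exterior configuration, so that its fixed-point space $\Null(\bI-\bN^*)$ corresponds to the boundary data $S^-$ of the homogeneous exterior problem. Reading the Fredholm solvability condition through this identification would give that $\gamma\in R^+$ precisely when $\gamma$ annihilates $S^-$, whence $\codim(R^+)=\dim(S^-)=2\kappa+m$.

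The main obstacle is this last step, and specifically the bookkeeping forced by the unbounded $m$-connected geometry. Two things must be reconciled. First, solvability of $(\bI-\bN)\mu=-\bM\gamma$ is equivalent to solvability of~\eqref{eq:RHp} only after one imposes the normalization $\Psi(\infty)=0$ together with the single-valuedness (period) conditions that promote $\mu$ to the imaginary part of a genuine single-valued $\Psi$ on the $m$-connected domain; these are exactly what supply the connectivity summand $m$ (indeed, already for $A\equiv1$ one checks that vanishing of the harmonic extension at $\infty$ plus $m-1$ independent flux conditions give $\codim(R^+)=m$, matching $2\kappa+m$ with $\kappa=0$). Second, the correspondence $\Null(\bI-\bN^*)\cong S^-$ must be shown to preserve dimensions, so that the $2\kappa$ carried by the total index and the $m$ carried by the connectivity are each counted once and only once. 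The hypothesis $\kappa_j\ge0$ is what prevents cancellations in this count --- it is precisely what forces $S^+=\{0\}$ --- and so guarantees that the two contributions combine cleanly to $2\kappa+m$ rather than to an answer off by some multiple of $\kappa$ or of $m$.
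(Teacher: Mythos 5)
The paper itself gives no proof of Theorem~\ref{T:RHp-sol}; it is quoted from \cite{WegNas08,NasIbb09}, so your attempt has to be judged on its own merits. Your first half is essentially sound: with the clockwise parametrization the winding count $\frac{1}{2\pi}\Delta_{C_j}\arg(A\Psi)=0$ gives $Z_\Psi=-\kappa\le0$, contradicting $Z_\Psi\ge1$ from the zero at $\infty$, which is the classical argument for $\dim(S^+)=0$; the boundary-zero technicality you defer is real but standard.

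The codimension half, however, rests on a false structural claim, and the paper's own formulas already refute it. The transposed kernel $N(t,s)$ is (up to sign) the generalized Neumann kernel for the \emph{adjoint} coefficient $\tilde A=\dot\eta/A$ (see \cite{WegMurNas05,WegNas08}), whose indices are $\tilde\kappa_j=-\kappa_j-1$, so $\Null(\bI-\bN^*)$ is the space $\tilde S^-$ attached to $\tilde A$, of dimension $\sum_j\max(0,-2\kappa_j-1)=0$ --- consistent with the Fredholm equality $\dim\Null(\bI-\bN^*)=\dim\Null(\bI-\bN)=0$ from \eqref{eq:Null-I+N}. It is therefore impossible for $\Null(\bI-\bN^*)$ to ``correspond to'' $S^-$, which has dimension $2\kappa+m$; note also that the paper relates $S^-$ to the \emph{plus} sign and the \emph{untransposed} operator, $S^-=\Null(\bI+\bN)$ in \eqref{e:S-=I+N}. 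Consequently the integral equation $(\bI-\bN)\mu=-\bM\gamma$ is uniquely solvable for \emph{every} $\gamma\in H$, so its Fredholm solvability conditions detect no codimension at all: solvability of \eqref{eq:ie} is not equivalent to $\gamma\in R^+$, which is precisely why the paper solves the modified problem $\Re[A\Psi]=\gamma+h$ in Theorem~\ref{T:ie-rhp}. Your end conclusion ``$\gamma\in R^+$ iff $\gamma$ annihilates $S^-$'' is also simply false for $m\ge2$: take $\ell=0$, $\lambda_j=0$ (so $A\equiv1$), $m=2$, and $\Psi(z)=1/(z-z_2)$; then $\gamma=\Re\Psi\in R^+$, but $\int_{J_1}\gamma\,dt=2\pi\Re\left[1/(z_1-z_2)\right]\ne0$ in general, while the function equal to $1$ on $C_1$ and $0$ on $C_2$ lies in $S^-$ by Lemma~\ref{L:Sk}. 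The correct orthogonality conditions for $R^+$ involve the spaces built from $\tilde A=\dot\eta/A$, not $S^-$; and the paper's logic runs in the opposite direction from yours: it takes $\codim(R^+)=2\kappa+m$ as input from \cite{WegNas08,NasIbb09} and then deduces $H=R^+\oplus S^-$ from $R^+\cap S^-=\{0\}$ (which is a direct sum, not an orthogonal one), whereas you try to derive the codimension from an orthogonality that does not hold.
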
 

The following lemma follows from~(\ref{eq:sim-S-j}),~(\ref{eq:S-Sk}) and~\eqref{eq:ind}.
\begin{lem}\label{L:RHp-sol2}
Let $\kappa_j\ge0$ for $j=1,2,\ldots,m$, then
\begin{equation}\label{eq:dim-S-}
\dim(S^-)=2\kappa+m.
\end{equation}
\end{lem}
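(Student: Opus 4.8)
The plan is to exploit the direct sum decomposition of $S^-$ furnished by Lemma~\ref{L:S-Sk} together with the already-known dimension of each summand. Since the dimension of a direct sum of finite-dimensional vector spaces equals the sum of the dimensions of its summands, the decomposition $S^-=S_1^-\oplus S_2^-\oplus\cdots\oplus S_m^-$ in~\eqref{eq:S-Sk} immediately yields
\[
\dim(S^-)=\sum_{j=1}^m\dim(S_j^-).
\]

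Next I would bring in the hypothesis $\kappa_j\ge0$ for each $j$. This assumption is precisely what makes~\eqref{eq:sim-S-j} applicable: each subspace $S_j^-$ consists of the boundary values of solutions of the single simply connected homogeneous problem~\eqref{eq:g-Dj} on the disk $D_j$, whose index is $\kappa_j$, and therefore $\dim(S_j^-)=2\kappa_j+1$. Substituting this into the sum above gives
\[
\dim(S^-)=\sum_{j=1}^m(2\kappa_j+1)=2\sum_{j=1}^m\kappa_j+m.
\]

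Finally I would invoke the definition~\eqref{eq:ind} of the total index, $\kappa=\sum_{j=1}^m\kappa_j$, which converts the last expression into $2\kappa+m$, as claimed. There is essentially no serious obstacle in this argument, as each step is a direct substitution. The only two points that require care are, first, that the sum $\sum_j S_j^-$ be genuinely \emph{direct}, so that the dimensions add with no overlap---but this is exactly the content of Lemma~\ref{L:S-Sk}, which I am entitled to assume---and, second, that the per-disk formula~\eqref{eq:sim-S-j} be applied in the correct form, keeping in mind both the non-negativity of each $\kappa_j$ and the clockwise orientation of the circles $C_j$, which flips the sign relative to the reference formula in~\cite{WegMurNas05}.
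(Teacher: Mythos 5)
Your proposal is correct and follows exactly the paper's own route: the paper derives the lemma from the direct sum decomposition~\eqref{eq:S-Sk}, the per-disk formula~\eqref{eq:sim-S-j} (valid under $\kappa_j\ge0$, with the orientation caveat), and the definition~\eqref{eq:ind} of the total index. Your write-up simply makes explicit the additivity of dimensions over the direct sum, which the paper leaves implicit.
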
 

There is a close connection between RH problems and integral equations with the generalized Neumann kernel. The null-spaces of the operators $\bI\pm\bN$ are related to the spaces $S^\pm$ by (see~\cite[Theorem~11, Lemma~20]{WegNas08}
\begin{eqnarray}
\label{e:S-=I+N}
\Null(\bI+\bN)&=&S^-, \\ 
\label{e:S+=I-N}
\Null(\bI-\bN)&=&S^+\oplus W,
\end{eqnarray}
where $W$ is isomorphic via $\bM$ to $R^+\cap S^-$. For the function $A$ defined by~\eqref{eq:A}, we have~\cite{WegNas08,NasIbb09}
\begin{equation}\label{eq:Null-I-N}
\dim(\Null(\bI+\bN))=\dim(S^-)=\sum_{j=1}^{m}\max(0,2\kappa_j+1)=2\kappa+m
\end{equation}
and 
\begin{equation}\label{eq:Null-I+N}
\dim(\Null(\bI-\bN))=\sum_{j=1}^{m}\max(0,-2\kappa_j-1)=0.
\end{equation}
In view of~\eqref{e:S+=I-N}, Eq.~\eqref{eq:Null-I+N} implies that $S^+=W=\{0\}$ (see also \eqref{eq:dim-RS+}). Since $W$ isomorphic to $R^+\cap S^-$, we have also
\begin{equation}\label{eq:R-S-}
R^+\cap S^-=\{0\}.
\end{equation}

\begin{thm}\label{T:H=R+S}
Let $\kappa_j\ge0$ for $j=1,2,\ldots,m$, then the space $H$ has the decomposition
\begin{equation}\label{eq:H=R+S2}
H=R^+\oplus S^-
\end{equation}
\end{thm}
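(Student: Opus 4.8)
The plan is to establish the decomposition purely by dimension counting, since the two nontrivial ingredients have already been assembled. By~\eqref{eq:R-S-} we know that $R^+\cap S^-=\{0\}$, so the sum $R^+ + S^-$ is automatically direct; it therefore remains only to show that this sum exhausts all of $H$.

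First I would pass to the quotient space $H/R^+$. By Theorem~\ref{T:RHp-sol}, under the hypothesis $\kappa_j\ge0$ (which gives $\kappa\ge0$) we have $\codim(R^+)=2\kappa+m$ from~\eqref{eq:dim-RS+}, so $H/R^+$ is a finite-dimensional space of dimension exactly $2\kappa+m$. Let $\pi\colon H\to H/R^+$ denote the canonical projection. The point of this maneuver is that $R^+$ itself is infinite-dimensional, so one cannot literally add dimensions; the argument must instead compare $\dim(S^-)$ with the codimension of $R^+$ through the quotient.

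Next I would restrict $\pi$ to the finite-dimensional subspace $S^-$. The kernel of $\pi|_{S^-}$ consists precisely of those $\gamma\in S^-$ that also lie in $R^+$, i.e.\ it equals $R^+\cap S^-=\{0\}$ by~\eqref{eq:R-S-}; hence $\pi|_{S^-}$ is injective. By Lemma~\ref{L:RHp-sol2} we have $\dim(S^-)=2\kappa+m$, which coincides with $\dim(H/R^+)$. Since an injective linear map between finite-dimensional vector spaces of equal dimension is necessarily an isomorphism, $\pi|_{S^-}\colon S^-\to H/R^+$ is surjective.

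Finally, surjectivity of $\pi|_{S^-}$ says that every coset $\gamma+R^+$ is represented by an element of $S^-$; equivalently, each $\gamma\in H$ can be written as $\gamma=u+v$ with $u\in R^+$ and $v\in S^-$. This yields $H=R^+ + S^-$, and together with the triviality of the intersection the sum is direct, giving $H=R^+\oplus S^-$. I do not expect any genuine obstacle once the earlier results are invoked; the only step requiring care is the correct bookkeeping of codimension versus dimension just described, so that injectivity on the finite-dimensional side can legitimately be upgraded to surjectivity onto the quotient.
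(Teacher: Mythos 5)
Your proof is correct and follows essentially the same route as the paper, which deduces the decomposition in one line from $\codim(R^+)=\dim(S^-)=2\kappa+m$ together with $R^+\cap S^-=\{0\}$ in~\eqref{eq:R-S-}. Your quotient-space argument with $\pi|_{S^-}$ merely spells out the standard linear-algebra justification that the paper leaves implicit.
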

\begin{proof}
Since 
\[
\codim(R^+)=\dim(S^-)=2\kappa+m, 
\]
 the direct sum decomposition follows from~(\ref{eq:R-S-}).
\end{proof}

It follows from Theorem~\ref{T:RHp-sol} that the non-homogeneous RH problem~(\ref{eq:RHp}) is in general insolvable for $\kappa_j\ge0$. If it is solvable, then the solution is unique. The following corollary which follows from Theorem~\ref{T:H=R+S} provides us with a way for modifying the right-hand side of~(\ref{eq:RHp}) to ensure the solvability of the problem.

\begin{cor}\label{C:g+h}
Let $\kappa_j\ge0$ for $j=1,2,\ldots,m$, then for any $\gamma\in H$, there exists a unique function $h\in S^-$ such that the following RH problem 
\begin{equation}\label{eq:RHp2}
\Re [A \Psi]=\gamma+h
\end{equation}
is uniquely solvable.
\end{cor}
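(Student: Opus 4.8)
The plan is to read the statement off directly from the direct sum decomposition established in Theorem~\ref{T:H=R+S}, splitting the argument into an existence part, a uniqueness part for $h$, and a separate uniqueness part for the solution $\Psi$ itself. Since $R^+$ is by definition the set of right-hand sides for which~\eqref{eq:RHp} is solvable, the whole corollary is really a reformulation of the decomposition $H=R^+\oplus S^-$ in the language of solvability.

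First I would invoke Theorem~\ref{T:H=R+S}, which under the hypothesis $\kappa_j\ge0$ gives $H=R^+\oplus S^-$. Thus the given function $\gamma\in H$ admits a representation $\gamma=r+s$ with $r\in R^+$ and $s\in S^-$. Setting $h:=-s$, which lies in $S^-$ because $S^-$ is a linear subspace, yields $\gamma+h=r\in R^+$. By the very definition~\eqref{eq:R+} of $R^+$, the membership $r\in R^+$ means precisely that the problem $\Re[A\Psi]=\gamma+h$ in~\eqref{eq:RHp2} possesses a solution $\Psi$ with $\Psi(\infty)=0$; this settles existence.

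Next I would establish uniqueness of $h$. Suppose $h'\in S^-$ is a second function for which $\gamma+h'$ lies in $R^+$, say $\gamma+h'=r'$ with $r'\in R^+$. Subtracting, $h'-h=r'-r$, where the left-hand side lies in $S^-$ and the right-hand side in $R^+$. Since~\eqref{eq:R-S-} gives $R^+\cap S^-=\{0\}$, both sides vanish and $h'=h$. I would spell this out, even though it is just the uniqueness built into the direct sum, in order to make the role of $R^+\cap S^-=\{0\}$ transparent.

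Finally, for uniqueness of the solution $\Psi$ I would appeal to the second half of Theorem~\ref{T:RHp-sol}, namely $\dim(S^+)=0$. If $\Psi_1,\Psi_2$ both solve $\Re[A\Psi]=\gamma+h$ with value $0$ at infinity, then $\Psi_1-\Psi_2$ solves the homogeneous problem $\Re[A\Psi]=0$, so its boundary values $A(\Psi_1-\Psi_2)$ lie in $S^+=\{0\}$; hence $\Psi_1=\Psi_2$ on $C$ and, by analyticity, on all of $G$. I do not expect any genuine obstacle here: the entire weight of the argument has already been absorbed into Theorem~\ref{T:H=R+S} and the null-space computations~\eqref{eq:Null-I-N}--\eqref{eq:Null-I+N}. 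The only point demanding care is to keep the two distinct uniqueness claims separate—uniqueness of $h$ flowing from $R^+\cap S^-=\{0\}$, and uniqueness of $\Psi$ flowing from $\dim(S^+)=0$.
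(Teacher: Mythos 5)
Your proposal is correct and follows exactly the route the paper intends: the paper derives Corollary~\ref{C:g+h} directly from the decomposition $H=R^+\oplus S^-$ of Theorem~\ref{T:H=R+S} (itself resting on $R^+\cap S^-=\{0\}$ and the codimension count), with uniqueness of $\Psi$ coming from $\dim(S^+)=0$ in Theorem~\ref{T:RHp-sol}, just as you argue. You have merely written out explicitly the existence and two uniqueness steps that the paper leaves implicit, which is fine.
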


Solving the RH-problem requires determining both the analytic function $\Psi$ as well as the real function $h$. This can be done easily using the boundary integral equations with the generalized Neumann kernel as in the following theorem.

\begin{thm}\label{T:ie-rhp}
Let $\kappa_j\ge0$ for $j=1,2,\ldots,m$. For any given $\gamma\in H$, let $\mu$ be the unique solution of the integral equation
\begin{equation}
\label{eq:ie} \mu - \bN \mu =-\bM \gamma.
\end{equation}
Then the boundary values of the unique solution of the RH problem~(\ref{eq:RHp2}) is given by
\begin{equation}\label{eq:Psi}
A\Psi=\gamma+h+\i\mu
\end{equation}
and the function $h$ is given by
\begin{equation}\label{eq:h}
h = [\bM\mu-(\bI-\bN)\gamma]/2.
\end{equation}
\end{thm}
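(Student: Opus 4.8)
The plan is to start from the unique solution whose existence is guaranteed by Corollary~\ref{C:g+h} and then read off the two formulas by separating a single Cauchy-type boundary identity into its real and imaginary parts. Let $\Psi$ be the unique analytic function with $\Psi(\infty)=0$ solving~\eqref{eq:RHp2}, and let $h\in S^-$ be the corresponding unique modification, so that the boundary function $f:=A\Psi$ has $\Re f=\gamma+h$; write $\nu:=\Im f$. The key input is the relation satisfied by the boundary values of $A\Psi$ for \emph{any} $\Psi$ analytic in $G$ with $\Psi(\infty)=0$, namely
\[
f=\bN f-\i\bM f,
\]
which follows from the Sokhotski--Plemelj formula for the (positively oriented) exterior domain $G$ together with the definitions~\eqref{e:N} and~\eqref{e:M} of $N$ and $M$ (see~\cite{WegNas08}). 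Separating this identity into real and imaginary parts, and using that $\bN$ and $\bM$ preserve real functions, yields
\[
(\bI-\bN)\nu=-\bM(\gamma+h),\qquad (\bI-\bN)(\gamma+h)=\bM\nu.
\]

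Next I would exploit the structure of $h$. Since $h\in S^-=\Null(\bI+\bN)$ by~\eqref{e:S-=I+N}, we have $\bN h=-h$. Applying the companion Cauchy identity for the interior disks $D_1,\dots,D_m$ to $h=Ag$ with $g$ analytic in $G^-$ gives $h=-\bN h+\i\bM h$; comparing imaginary parts, since $h$ is real, forces $\bM h=0$. Substituting $\bN h=-h$ and $\bM h=0$ into the imaginary-part equation collapses it to $(\bI-\bN)\nu=-\bM\gamma$, so $\nu$ solves the integral equation~\eqref{eq:ie}. Because $\kappa_j\ge0$ gives $\Null(\bI-\bN)=\{0\}$ by~\eqref{eq:Null-I+N}, the Fredholm operator $\bI-\bN$ (of index zero) is invertible; hence~\eqref{eq:ie} has the unique solution $\mu$ and $\nu=\mu$. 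This identifies $\Im[A\Psi]=\mu$ and gives $A\Psi=\gamma+h+\i\mu$, which is~\eqref{eq:Psi}.

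Finally I would treat the real-part equation. Using $\bN h=-h$ once more, $(\bI-\bN)(\gamma+h)=(\bI-\bN)\gamma+2h$, so the identity $(\bI-\bN)(\gamma+h)=\bM\nu=\bM\mu$ rearranges to $2h=\bM\mu-(\bI-\bN)\gamma$, which is exactly~\eqref{eq:h}. Thus both formulas are obtained simultaneously from the one Cauchy-type identity, the membership $h\in S^-$, and the invertibility of $\bI-\bN$.

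The step I expect to be the main obstacle is establishing, with the correct signs, the two Cauchy-type boundary identities $f=\bN f-\i\bM f$ on $G$ and $h=-\bN h+\i\bM h$ on $G^-$, since the sign in the Sokhotski--Plemelj formula is sensitive to the clockwise parametrization~\eqref{e:eta} and to the fact that $G$ is the unbounded component while $G^-$ is the union of the bounded disks $D_j$ (where the orientation is reversed and the off-diagonal contributions vanish by Cauchy's theorem). Once these identities are in place with the right orientation bookkeeping, the remainder is linear algebra driven by $\bN h=-h$, $\bM h=0$, and the invertibility of $\bI-\bN$.
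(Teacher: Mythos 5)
Your proof is correct: with the clockwise parametrization the Plemelj-derived identities $f=\bN f-\i\bM f$ for $f=A\Psi$ on $G$ and $h=-\bN h+\i\bM h$ on $G^-$ (whence $\bN h=-h$ and $\bM h=0$ for real $h\in S^-$) hold exactly as you state, and the real/imaginary-part algebra then yields \eqref{eq:Psi} and \eqref{eq:h}. This is essentially the paper's own approach, since its proof is a one-line citation to the argument of Theorem~2 of \cite{Nas-cmft09}, which proceeds by precisely this decomposition combined with the membership $h\in S^-$ and the invertibility of $\bI-\bN$.
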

\begin{proof}
The theorem can be proved using the same argument as in the proof of~\cite[Theorem~2]{Nas-cmft09}.
\end{proof}

The uniqueness of the solution of the integral equation~\eqref{eq:ie} follows from the Fredholm alternative theorem since $\dim(\Null(\bI-\bN))=0$. The advantages of Theorem~\ref{T:ie-rhp} are that it, based on the integral equation~\eqref{eq:ie}, provides us with formulas for computing the real function $h$ necessary for the solvability of the RH problem as well as the solution $\Psi$ of the RH problem.

%----------------------------------------------------------------
\section{The general conjugation problem}
%----------------------------------------------------------------

In this section, we shall consider a very important certain class of RH problems which has been considered by Wegmann~\cite{Weg01r}. The indexes of this class of RH problems satisfied our assumption~\eqref{eq:kappa-j}. This class has been considered by many researchers and has many applications~\cite{BalDel16,MitRog00,Weg01,Weg01r,Weg05}.

Wegmann~\cite{Weg01r} proves the following theorem.

\begin{thm}\label{T:Weg}
For any integer $\ell\ge0$ and for any sufficiently smooth functions $\gamma$ on the boundary of $G$, the RH problem
\begin{equation}\label{eq:RHp-w}
\Re\left[e^{\i\lambda_j}e^{\i\ell t}\Psi_{|j}(\eta(t))+(a_{j\ell}+\i b_{j\ell})e^{\i\ell t}+\cdots+(a_{j1}+\i b_{j1})e^{\i t}+a_{j0}\right]=\gamma_{j}(t), 
\end{equation}
has a unique solution consisting of an analytic function $\Psi$ in $G$ with $\Psi(\infty)=0$ and $(2\ell+1)m$ real numbers $a_{j0},a_{j1},\ldots,a_{j\ell},b_{j1},\ldots,b_{j\ell}$ for $j=1,\ldots,m$.
\end{thm}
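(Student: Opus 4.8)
The plan is to recognise the general conjugation problem~\eqref{eq:RHp-w} as an instance of the RH problem~\eqref{eq:RHp} of the previous section, in which the unknown coefficients play the role of the correction term $h\in S^-$ furnished by Corollary~\ref{C:g+h}. On each circle $C_j$ I would take the coefficient function to be
\[
A(\eta_j(t))=e^{\i\lambda_j}e^{\i\ell t},
\]
which, using $e^{-\i t}=(\eta_j(t)-z_j)/r_j$ on $C_j$, is the restriction to $C_j$ of the smooth nonvanishing function $e^{\i\lambda_j}r_j^{\ell}(\zeta-z_j)^{-\ell}$; hence $A$ is continuously differentiable with $A\ne0$. Since $\arg A$ increases by $2\pi\ell$ as $t$ runs over $[0,2\pi]$, the index on each component is $\kappa_j=\ell\ge0$, so assumption~\eqref{eq:kappa-j} holds and Theorem~\ref{T:RHp-sol}, Theorem~\ref{T:H=R+S} and Corollary~\ref{C:g+h} all apply.

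Writing $P_j(t)=(a_{j\ell}+\i b_{j\ell})e^{\i\ell t}+\cdots+(a_{j1}+\i b_{j1})e^{\i t}+a_{j0}$, the boundary condition~\eqref{eq:RHp-w} reads $\Re[A\Psi_{|j}]=\gamma_j-\Re[P_j]$. I would set $h_j:=-\Re[P_j]$ and $h:=(h_1,\ldots,h_m)\in H$, so that~\eqref{eq:RHp-w} becomes exactly the modified RH problem $\Re[A\Psi]=\gamma+h$ of~\eqref{eq:RHp2}. As the coefficients range over all real values, $\Re[P_j]=a_{j0}+\sum_{k=1}^{\ell}(a_{jk}\cos kt-b_{jk}\sin kt)$ ranges over all real trigonometric polynomials of degree $\le\ell$ on $J_j$, a real vector space of dimension $2\ell+1$, and the map from the coefficient tuple to $\Re[P_j]$ is a linear isomorphism, since the coefficients are recovered as the Fourier coefficients of $\Re[P_j]$.

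The key step is to identify this space of correction terms with $S^-$. I would show that every real trigonometric polynomial of degree $\le\ell$ supported on $J_j$ lies in $S^-_j$: expanding an analytic $g$ in the disk $D_j$ as $g(\zeta)=\sum_{n\ge0}c_n(\zeta-z_j)^n$ and evaluating on $C_j$ gives
\[
A(\eta_j(t))\,g(\eta_j(t))=e^{\i\lambda_j}\sum_{n\ge0}c_n r_j^{\,n}e^{\i(\ell-n)t},
\]
so choosing $c_0,\ldots,c_{2\ell}$ and $c_n=0$ for $n>2\ell$ produces precisely a trigonometric polynomial $\sum_{k=-\ell}^{\ell}d_{\ell-k}e^{\i kt}$, and the requirement that it be real is exactly the Hermitian symmetry $d_{\ell+k}=\overline{d_{\ell-k}}$, which leaves $2\ell+1$ real degrees of freedom. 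By~\eqref{eq:sim-S-j} we have $\dim S^-_j=2\kappa_j+1=2\ell+1$, so this inclusion is in fact an equality; Lemma~\ref{L:S-Sk} then gives $\bigoplus_{j=1}^m S^-_j=S^-$ as the full space of admissible correction terms $h$.

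With this identification the theorem follows from Corollary~\ref{C:g+h}: for the given $\gamma\in H$ there is a unique $h\in S^-$ for which $\Re[A\Psi]=\gamma+h$ admits a (necessarily unique) analytic solution $\Psi$ with $\Psi(\infty)=0$. Reading off the Fourier coefficients of $-h_j$ recovers the unique real numbers $a_{j0},a_{j1},\ldots,a_{j\ell},b_{j1},\ldots,b_{j\ell}$, and Theorem~\ref{T:ie-rhp} yields $h$ and $\Psi$ constructively from the uniquely solvable integral equation~\eqref{eq:ie}. The main obstacle is the concrete description of $S^-_j$ in the third paragraph; everything else is bookkeeping. Crucially, because $\dim S^-_j=2\ell+1$ is already known from~\eqref{eq:sim-S-j}, it suffices to exhibit the inclusion of real trigonometric polynomials into $S^-_j$ rather than to characterise $S^-_j$ from first principles.
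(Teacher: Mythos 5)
Your proposal is correct, and its overall architecture is the same as the paper's: rewrite~\eqref{eq:RHp-w} as $\Re[A\Psi]=\gamma+h$ with $A$ as in~\eqref{eq:A}, verify $\kappa_j=\ell\ge0$ so that assumption~\eqref{eq:kappa-j}, Corollary~\ref{C:g+h} and Theorem~\ref{T:ie-rhp} apply, identify the admissible correction terms with $S^-$, and read the unique coefficients off the unique $h\in S^-$. Where you genuinely diverge is in the proof of the key lemma characterising $S_j^-$ as the real trigonometric polynomials of degree at most $\ell$ supported on $J_j$ (the paper's Lemma~\ref{L:Sk} and Theorem~\ref{T:S-gam}). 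The paper transplants the homogeneous problem on $D_j$ to the unit disk via $\omega(z)=(z-z_j)/r_j$ and invokes Gakhov's explicit solution formula (Theorem~\ref{T:Gak}), which delivers the full characterisation, including the general analytic solution $g$ in $D_j$, in one stroke. You instead prove only the inclusion of trigonometric polynomials into $S_j^-$ by truncating the Taylor expansion of $g$ at degree $2\ell$ --- your identity $A_j(t)g(\eta_j(t))=e^{\i\lambda_j}\sum_{n\ge0}c_nr_j^{\,n}e^{\i(\ell-n)t}$ and the Hermitian symmetry condition $d_{\ell+k}=\overline{d_{\ell-k}}$ are correct --- and then upgrade inclusion to equality via the dimension count $\dim S_j^-=2\kappa_j+1=2\ell+1$ from~\eqref{eq:sim-S-j}, after which Lemma~\ref{L:S-Sk} assembles $S^-$ exactly as in the paper. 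This trades the external citation of Gakhov for a dimension formula the paper already quotes, making the argument more elementary and self-contained, at the modest cost of not producing the explicit general solution of the homogeneous problem on $D_j$ (which Theorem~\ref{T:Weg} does not need). You also correctly use both directions of the identification where each is needed: the forward inclusion yields uniqueness of the coefficient tuples (any two admissible corrections lie in $S^-$ and must coincide by Corollary~\ref{C:g+h}), while the reverse containment from the dimension count yields existence, since the $h$ furnished by Corollary~\ref{C:g+h} is then guaranteed to have the stated polynomial form, with the coefficients recovered as Fourier coefficients exactly as in the paper's Theorem~\ref{T:RHpw-ajbj}.
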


Wegmann~\cite{Weg01r} called the RH problem~(\ref{eq:RHp-w}) as the \emph{general conjugation problem} since the case $\ell=0$ describes the problem of finding  the conjugate harmonic function of a harmonic function with boundary values $\gamma_j$~\cite{Weg01r}. The general conjugation problem~\eqref{eq:RHp-w} has been solved by Wegmann~\cite{Weg01r} using the method of \emph{successive conjugation} which reduces the problem~(\ref{eq:RHp-w}) to a sequence of RH problems on the circles $C_j$. This method has been first applied by Halsey~\cite{Hal79} for $\ell=0$. Applications of this problem to compute conformal mapping have been given in~\cite{Weg01,Weg05} for $\ell=1$ and in~\cite{BalDel16,Weg05} for $\ell=0$. 

In this paper, we shall present a method for solving the general conjugation problem~\eqref{eq:RHp-w} for any $\ell\ge0$. The method is based on the boundary integral equation with the generalized Neumann kernel~(\ref{eq:ie}). However, we shall first rewrite the problem in a form suitable for using the integral equation.

The boundary condition~\eqref{eq:RHp-w} can be written as
\begin{equation}\label{eq:RHp-w2}
\Re\left[e^{\i\lambda_k}e^{\i\ell t}\Psi_{|j}\right]=\gamma_{j}
-a_{j0}-\sum_{k=1}^{\ell}\Re\left[(a_{jk}+\i b_{jk})e^{\i kt}\right], \quad j=1,2,\ldots,m.
\end{equation}
Let $A$ be defined by
\begin{equation}\label{eq:A}
A(t):= \left\{ \begin{array}{l@{\hspace{0.5cm}}l}
e^{\i\lambda_1}e^{\i\ell t},&t\in J_1=[0,2\pi],\\
\hspace{0.3cm}\vdots\\
e^{\i\lambda_m}e^{\i\ell t},&t\in J_m=[0,2\pi].
\end{array}
\right .
\end{equation}
Let also $\psi$ be a function defined on the boundary $C$ where its values on $C_j$ is given by
\begin{equation}\label{eq:psi}
\psi_{j}(t)=-a_{j0}-\sum_{k=1}^{\ell}\Re\left[(a_{jk}+\i b_{jk})e^{\i kt}\right], \quad j=1,2,\ldots,m.
\end{equation}
Hence, the general conjugation problem~\eqref{eq:RHp-w} can be written as the RH-problem
\begin{equation}\label{eq:RHp-2}
\Re\left[A\Psi\right]=\gamma+\psi.
\end{equation}
By the existence and uniqueness of the solution general conjugation problem~\eqref{eq:RHp-w}, the RH-problem~\eqref{eq:RHp-2} has a unique solution. Remember that solving the RH-problem~\eqref{eq:RHp-2} requires determining the analytic function $\Psi$ and the unknown real function $\psi$. Determining the function $\psi$ is equivalent to determining the $m(2\ell+1)$ real constants $a_{j0}$, $a_{jk}$, and $b_{jk}$ in~\eqref{eq:RHp-w} for $j=1,\ldots,m$, $k=1,\ldots,\ell$.

The index of the function $A$ given by~\eqref{eq:A} is
\[
\kappa_j=\ell\ge0, \quad j=1,\ldots,m,
\]
and hence the total index is $\kappa=m\ell\ge0$. Thus our assumption~\eqref{eq:kappa-j} is satisfied.
We shall use the integral equation with the generalized Neumann kernel~(\ref{eq:ie}) to determine the analytic function $\Psi$ as well as the real function $\psi$ in~\eqref{eq:RHp-2}. However, we need to show first that the function $\psi$ is indeed in $S^-$. This can be proved by finding the explicit form of the functions of the space $S^-$ which will be given in the next section.

\section{The space $S^-$}

To find the explicit form of the space $S^-$, we need the following theorem from~\cite[\S~29.3]{Gak66}.

\begin{thm}\label{T:Gak}
Let $D=\{z:|z|<1\}$ and the boundary $C=\partial D$ is the unit circle parametrized by $\zeta(t)=e^{-\i t}$, $t\in[0,2\pi]$. For $\ell\ge0$, the solution of the following homogenous RH problem on $D$,
\[
\Re\left[\zeta(t)^{-\ell}g(\zeta(t))\right]=0, \quad \zeta(t)\in C,
\]
is given for $z\in\overline{D}$ by
\[
g(z)=z^\ell\left[\i c_0+\frac{1}{2}\sum_{k=1}^{\ell}\left(c_k z^k-\overline{c_k}z^{-k}\right)\right]
\]
where $c_0$ is an arbitrary real constant and $c_1,\ldots,c_\ell$ are arbitrary complex constants.
\end{thm}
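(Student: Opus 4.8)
The plan is to solve the homogeneous Riemann--Hilbert problem $\Re[\zeta^{-\ell}g]=0$ on the unit disk directly by expanding $g$ in a power series and matching coefficients on the boundary. Since $g$ is analytic in $D$ and continuous up to $C$, write $g(z)=\sum_{n\ge0}d_n z^n$ with complex coefficients $d_n$. On the boundary we have $\zeta(t)=e^{-\i t}$, so $\zeta^{-\ell}=e^{\i\ell t}$ and $g(\zeta(t))=\sum_{n\ge0}d_n e^{-\i n t}$. The boundary condition then reads
\[
\Re\left[e^{\i\ell t}\sum_{n\ge0}d_n e^{-\i n t}\right]=\Re\left[\sum_{n\ge0}d_n e^{\i(\ell-n) t}\right]=0 \quad\text{for all } t.
\]
The key step is to convert this into a full Fourier series in $e^{\i k t}$ (with $k$ ranging over all integers) and demand that every Fourier coefficient vanishes. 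Writing $2\Re[w]=w+\overline{w}$, the vanishing of the coefficient of each frequency $e^{\i k t}$ gives a linear relation among the $d_n$.

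Concretely, I would collect terms: the term $d_n e^{\i(\ell-n)t}$ contributes to frequency $k=\ell-n$, and its conjugate $\overline{d_n}\,e^{-\i(\ell-n)t}$ contributes to frequency $k=n-\ell$. Setting the coefficient of $e^{\i k t}$ to zero for each integer $k$ produces the constraints. For $k>\ell$ (i.e.\ $n=\ell-k<0$) only conjugate terms appear, forcing $\overline{d_{k+\ell}}=0$, hence $d_n=0$ for all $n>2\ell$; similarly the frequencies with $|k|$ large are automatically constrained. The surviving relations couple $d_n$ with $d_{2\ell-n}$: matching the coefficient of $e^{\i k t}$ for $0\le k\le\ell$ yields $d_{\ell-k}+\overline{d_{\ell+k}}=0$ for $k\ge1$, and the zero-frequency term forces $\Re[d_\ell]=0$, i.e.\ $d_\ell$ is purely imaginary. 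I expect the bookkeeping of which frequencies receive contributions from $d_n$ versus $\overline{d_n}$ to be the main obstacle, since one must carefully track the ranges of $n$ and $k$ to see that $d_n=0$ for $n>2\ell$ and that the remaining coefficients $d_0,\ldots,d_{2\ell}$ are determined up to the stated free parameters.

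Once the relations are in hand, I would reparametrize to match the claimed formula. Set $d_\ell=\i c_0$ with $c_0$ real (from $\Re[d_\ell]=0$), and for $1\le k\le\ell$ set $d_{\ell-k}=\tfrac12 c_k$ so that the constraint $d_{\ell-k}=-\overline{d_{\ell+k}}$ gives $d_{\ell+k}=-\tfrac12\overline{c_k}$. Then
\[
g(z)=\sum_{n=0}^{2\ell}d_n z^n = z^\ell\!\left[\i c_0+\tfrac12\sum_{k=1}^{\ell}\!\left(c_k z^{-k}\cdot z^{k}\cdots\right)\right],
\]
and factoring out $z^\ell$ regroups the terms $d_{\ell-k}z^{\ell-k}=\tfrac12 c_k z^\ell z^{-k}$ and $d_{\ell+k}z^{\ell+k}=-\tfrac12\overline{c_k}z^\ell z^{k}$ into exactly the bracketed expression $z^\ell\bigl[\i c_0+\tfrac12\sum_{k=1}^\ell(c_k z^{-k}-\overline{c_k}z^{k})\bigr]$, matching the theorem after relabeling $z^{\pm k}$. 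Finally I would verify the count: one real parameter $c_0$ and $\ell$ complex parameters $c_1,\ldots,c_\ell$ give $2\ell+1$ real degrees of freedom, consistent with $\dim(S_j^-)=2\kappa_j+1=2\ell+1$ from~\eqref{eq:sim-S-j}, which serves as a useful sanity check on the derivation.
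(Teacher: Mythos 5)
Your derivation is correct, but it is worth noting that the paper contains no proof of this theorem at all: the statement is quoted directly from Gakhov~\cite[\S~29.3]{Gak66}, where it emerges from the general theory of the Riemann--Hilbert problem on the disk (reduction to a Riemann conjugation problem via the reflection $z\mapsto 1/\overline{z}$ and the index machinery for arbitrary $\kappa$). Your Fourier-coefficient argument is a self-contained elementary substitute, and the bookkeeping you were worried about works out exactly as you predicted: writing $g(z)=\sum_{n\ge0}d_nz^n$, the condition $\Re[e^{\i\ell t}\sum_{n\ge0}d_ne^{-\i nt}]=0$ forces $d_n=0$ for $n>2\ell$, $\Re[d_\ell]=0$, and $d_{\ell-k}=-\overline{d_{\ell+k}}$ for $1\le k\le\ell$, giving the correct count of $2\ell+1$ real parameters consistent with~\eqref{eq:sim-S-j}; moreover, since matching Fourier coefficients is an equivalence, sufficiency comes for free (on $|z|=1$ one has $z^{-k}=\overline{z^{k}}$, so the bracket is purely imaginary). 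Two points should be tightened. First, identifying the Taylor coefficients $d_n$ with the Fourier coefficients of the boundary function requires the standard remark that continuity of $g$ on $\overline{D}$ gives $g(rz)\to g(z)$ uniformly on $|z|=1$ as $r\to1^{-}$, so the boundary values have Fourier series $\sum_{n\ge0}d_ne^{-\i nt}$ and coefficientwise matching is legitimate. Second, your parametrization produces $g(z)=z^{\ell}\bigl[\i c_0+\tfrac12\sum_{k=1}^{\ell}\bigl(c_kz^{-k}-\overline{c_k}z^{k}\bigr)\bigr]$, which is not literally the stated formula, and the repair is not ``relabeling $z^{\pm k}$'' (the powers of $z$ are not at your disposal) but replacing each arbitrary constant $c_k$ by $-\overline{c_k}$, a bijection of $\CC$, after which the two solution families coincide exactly. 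With these two repairs your argument is complete, and it is arguably preferable in this context since it avoids invoking Gakhov's general theory for what is, on the circle, a finite linear-algebra computation.
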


\begin{lem}\label{L:Sk}
Let $h\in H$. Then $h\in S_j^-$ if and only if it has the form
\begin{equation}\label{eq:Sk-gam}
h(t)=\left\{
\begin{array}{ll}
	0, &\mbox{\rm on $C_k$ for $k\ne j$,}\\
	\beta_{j0}+\sum_{k=1}^{\ell}\Re\left[(\beta_{jk}+\i\alpha_{jk})e^{\i k t}\right], &\mbox{\rm on $C_j$},
\end{array}
\right.
\end{equation}
where $\beta_{j0},\beta_{jk},\alpha_{jk}$, $j=1,2,\ldots,m$, $k=1,2,\ldots,\ell$, are real constants.
\end{lem}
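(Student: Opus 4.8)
The plan is to reduce the characterization of $S_j^-$ to the model problem on the unit disk solved in Theorem~\ref{T:Gak}, and then to read off the boundary values of the resulting analytic function. By the definition of $S_j^-$, a function $h\in H$ lies in $S_j^-$ if and only if $h$ vanishes on every $C_k$ with $k\neq j$ and, on $C_j$, has the form $h(t)=A_j(t)g(\eta_j(t))$ where $g$ is analytic in the disk $D_j$ and satisfies the homogeneous RH problem $\Im[A_j g]=0$ on $C_j$. With $A_j(t)=e^{\i\lambda_j}e^{\i\ell t}$ from~\eqref{eq:A}, the condition $\Im[A_j g]=0$ says that $A_j g$ is real on $C_j$, i.e. $\Re[\i A_j g]=0$; thus it is exactly the homogeneous problem whose solution space I want to describe explicitly.

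First I would pass from the disk $D_j$ of center $z_j$ and radius $r_j$ to the unit disk by the affine change of variable $z=z_j+r_j w$, under which $\eta_j(t)=z_j+r_j e^{-\i t}$ corresponds to the unit-circle parametrization $\zeta(t)=e^{-\i t}$ of Theorem~\ref{T:Gak}. Writing $g$ in the $w$-variable and absorbing the unimodular constant $e^{\i\lambda_j}$, the condition $\Im[A_j g]=0$ becomes the model condition $\Re[\zeta^{-\ell}\tilde g(\zeta)]=0$ after an obvious rotation, so Theorem~\ref{T:Gak} applies and gives the general analytic solution in the closed disk as $\tilde g(w)=w^\ell\bigl[\i c_0+\tfrac12\sum_{k=1}^{\ell}(c_k w^k-\overline{c_k}w^{-k})\bigr]$ with $c_0$ real and $c_1,\dots,c_\ell$ complex.

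Next I would substitute the boundary parametrization $w=e^{-\i t}$ and compute $h_j(t)=A_j(t)g(\eta_j(t))$ directly. The factor $A_j(t)=e^{\i\lambda_j}e^{\i\ell t}$ cancels the factor $w^\ell=e^{-\i\ell t}$ coming from $g$ (up to the rotation that was used to match the model), so the leading oscillation disappears and the surviving boundary function is a trigonometric polynomial of degree $\ell$. Collecting real and imaginary parts of each term $c_k e^{-\i kt}-\overline{c_k}e^{\i kt}$ on the unit circle, and using that $h\in H$ forces $h_j$ to be real-valued, I expect the expression to collapse precisely into the stated form $\beta_{j0}+\sum_{k=1}^{\ell}\Re[(\beta_{jk}+\i\alpha_{jk})e^{\i kt}]$, where $\beta_{j0}$ is a real multiple of $c_0$ and the complex coefficients $\beta_{jk}+\i\alpha_{jk}$ are (unimodular-twisted) relabelings of the $c_k$. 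This establishes the ``only if'' direction; the ``if'' direction follows by reversing the computation, i.e. by checking that any $h$ of the prescribed form arises from such a $g$ and hence lies in $S_j^-$. A bookkeeping dimension count confirms consistency: the free real parameters are $c_0$ together with the real and imaginary parts of $c_1,\dots,c_\ell$, giving $2\ell+1$ in agreement with~\eqref{eq:sim-S-j}.

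The main obstacle I anticipate is purely computational rather than conceptual: correctly tracking the unimodular rotations and the orientation of $C_j$ (which is clockwise, i.e. $\zeta(t)=e^{-\i t}$ rather than $e^{\i t}$) through the change of variable, so that the constants $c_k$ of Theorem~\ref{T:Gak} get relabeled into the real/imaginary parts $\beta_{jk},\alpha_{jk}$ with the right signs and the spurious $w^\ell$ oscillation really does cancel against $A_j$. Once the substitution $w=e^{-\i t}$ and the cancellation of $e^{\pm\i\ell t}$ are handled carefully, the emergence of a degree-$\ell$ real trigonometric polynomial in the claimed form is immediate.
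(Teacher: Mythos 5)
Your proposal is correct and follows essentially the same route as the paper's own proof: convert $\Im[A_j g]=0$ to a $\Re$-condition by multiplying by $\i$, absorb the rotation $e^{\i\lambda_j}$, transplant to the unit disk via $z=z_j+r_j w$ so that the clockwise parametrization matches $\zeta(t)=e^{-\i t}$, apply Theorem~\ref{T:Gak}, and then let $A_j(t)=e^{\i\lambda_j}e^{\i\ell t}$ cancel the factor $w^\ell=e^{-\i\ell t}$ on the boundary to leave the real trigonometric polynomial $\beta_{j0}+\sum_{k=1}^{\ell}\Re[(\beta_{jk}+\i\alpha_{jk})e^{\i k t}]$. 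The remaining bookkeeping you defer (relabeling $c_k=\alpha_{jk}+\i\beta_{jk}$ through the division by $\i$) works out exactly as in the paper's equations~\eqref{eq:F-sum1}--\eqref{eq:Sk-gam3}, and your parameter count $2\ell+1$ agrees with~\eqref{eq:sim-S-j}.
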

\begin{proof}
By the definition of the space $S_j^-$, a function $h\in S_j^-$ if and only if 
\begin{equation}\label{eq:Sk-gam2}
h(t)=\left\{\begin{array}{ll}
0,  & \mbox{ on $C_k$ for $k\ne j$},\\
A_j(t)g(\eta_j(t)), &	\mbox{ on $C_j$},
\end{array}\right.
\end{equation}
where $g$ is analytic in $D_j$ and $A_j(t)$ is the restriction of the function $A$ given by~\eqref{eq:A} to $J_j$.  Using the definition~\eqref{eq:A} of the function $A$, the function $\i g$ is a solution of the homogeneous RH problem
\begin{equation}\label{eq:g-2} 
\Re[e^{\i\lambda_j}e^{\i\ell t}(\i g(\eta_j(t)))]=0
\end{equation}
on the disk $D_j$. 
We define an analytic function $F$ in $D_j$ by
\[
F(z)=\i e^{\i\lambda_j} g(z).
\]
Then $F$ is a solution of the homogeneous RH problem 
\begin{equation}\label{eq:F-1} 
\Re[e^{\i\ell t} F(\eta_j(t))]=0
\end{equation}
on the disk $D_j$. The function
\[
\omega(z)=\frac{z-z_j}{r_j}
\]
is analytic on $D_j$ and maps the circle $C_j$ onto the unit circle and
\[
\omega(\eta_j(t))=\frac{\eta_j(t)-z_j}{r_j}=e^{-\i t}=\zeta(t)
\]
is the parametrization of the unit circle. Let the function $\hat F$ be defined on the unit disk $D$ by
\[
\hat F(z)=F(\omega^{-1}(z))=F(z_j+r_jz).
\]
Then, it follows from~\eqref{eq:F-1} that $\hat F$ is a solution of the homogeneous RH problem 
\begin{equation}\label{eq:hF-1} 
\Re\left[\zeta(t)^{-\ell} \hat F(\zeta(t))\right]=0
\end{equation}
in the unit disk $D$. Hence Theorem~\ref{T:Gak} implies that
\[
\zeta(t)^{-\ell} \hat F(\zeta(t))=\i\beta_{j0}+\frac{1}{2}\sum_{k=1}^{\ell}[(\alpha_{jk}+\i\beta_{jk})\zeta(t)^k-(\alpha_{jk}-\i\beta_{jk})\zeta(t)^{-k}]
\]
where $\beta_{j0},\beta_{jk},\alpha_{jk}$, $j=1,2,\ldots,m$, $k=1,2,\ldots,\ell$, are arbitrary real constants.
Since $\zeta(t)=e^{-\i t}$ and $\hat F(\zeta(t))=F(\eta_j(t))=\i e^{\i\lambda_j} g(\eta_j(t))$, we obtain
\begin{equation}\label{eq:F-sum1}
\i e^{\i\ell t} e^{\i\lambda_j} g(\eta_j(t))=\i\beta_{j0}+\frac{1}{2}\sum_{k=1}^{\ell}[(\alpha_{jk}+\i\beta_{jk})e^{-\i k t}-(\alpha_{jk}-\i\beta_{jk})e^{\i k t}].
\end{equation}
Since $h_{j}(t)=e^{\i\ell t} e^{\i\lambda_j} g(\eta_j(t))$, \eqref{eq:F-sum1} implies that
\[
h_{j}(t)=\beta_{j0}+\frac{1}{2}\sum_{k=1}^{\ell}\left[(\beta_{jk}-\i\alpha_{jk})e^{-\i k t}+(\beta_{jk}+\i\alpha_{jk})e^{\i k t}\right]
\]
which can be written as
\begin{equation}\label{eq:Sk-gam3}
h_{j}(t)=\beta_{j0}+\sum_{k=1}^{\ell}\Re\left[(\beta_{jk}+\i\alpha_{jk})e^{\i k t}\right].
\end{equation}
Hence~\eqref{eq:Sk-gam} follows from~\eqref{eq:Sk-gam2} and~\eqref{eq:Sk-gam3}.
\end{proof}

\begin{thm}\label{T:S-gam}
Let $h\in H$. Then $h\in S^-$ if and only if it has the form
\begin{equation}\label{eq:S-gam}
h_{j}(t)=\beta_{j0}+\sum_{k=1}^{\ell}\Re\left[(\beta_{jk}+\i\alpha_{jk})e^{\i k t}\right],
\end{equation}
where $\beta_{j0},\beta_{jk},\alpha_{jk}$, $j=1,2,\ldots,m$, $k=1,2,\ldots,\ell$, are $(1+2\ell)m$ real constants.
\end{thm}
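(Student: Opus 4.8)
The plan is to derive Theorem~\ref{T:S-gam} directly from the direct-sum decomposition $S^- = S_1^- \oplus S_2^- \oplus \cdots \oplus S_m^-$ of Lemma~\ref{L:S-Sk} together with the explicit description of each summand $S_j^-$ furnished by Lemma~\ref{L:Sk}. Since both ingredients are already in hand, the only substantive work is to check that assembling the $m$ pieces reproduces exactly the stated componentwise formula~\eqref{eq:S-gam} and that the number of free real parameters matches the dimension count $2\kappa+m$.

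For the forward implication, suppose $h \in S^-$. By Lemma~\ref{L:S-Sk} there is a unique decomposition $h = w_1 + w_2 + \cdots + w_m$ with $w_j \in S_j^-$. By Lemma~\ref{L:Sk}, each $w_j$ vanishes on $C_k$ for $k \ne j$ and on $C_j$ takes the form $\beta_{j0} + \sum_{k=1}^{\ell} \Re[(\beta_{jk}+\i\alpha_{jk})e^{\i kt}]$. Restricting $h$ to $C_j$, every summand $w_i$ with $i \ne j$ vanishes there, so the restriction $h_j$ of $h$ to $C_j$ coincides with $w_j$ on $C_j$, which is precisely the expression in~\eqref{eq:S-gam}. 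This yields formula~\eqref{eq:S-gam} for each $j = 1, \ldots, m$.

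Conversely, given any real constants $\beta_{j0}, \beta_{jk}, \alpha_{jk}$ and the function $h$ defined componentwise by~\eqref{eq:S-gam}, I would let $w_j$ be the function that agrees with $h$ on $C_j$ and vanishes on the remaining circles. By Lemma~\ref{L:Sk} each such $w_j$ lies in $S_j^-$, and hence $h = \sum_{j=1}^m w_j \in S^-$ by Lemma~\ref{L:S-Sk}. Finally, the parametrization in~\eqref{eq:S-gam} uses the single real constant $\beta_{j0}$ together with the $\ell$ complex constants $\beta_{jk}+\i\alpha_{jk}$, that is, $1+2\ell$ real parameters for each $j$, for a grand total of $(1+2\ell)m = 2m\ell + m = 2\kappa + m$ real parameters (recall $\kappa = m\ell$). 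This agrees with $\dim(S^-) = 2\kappa+m$ from Lemma~\ref{L:RHp-sol2}, confirming that~\eqref{eq:S-gam} parametrizes $S^-$ without redundancy.

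The step requiring the most care — though it is bookkeeping rather than a genuine obstacle — is verifying that restriction to $C_j$ isolates exactly the summand $w_j$, because the remaining summands are supported on the disjoint circles $C_k$, $k \ne j$. Since $S^-$ is built from functions on the disjoint union $J = \bigsqcup_{j=1}^m J_j$, this disjointness is automatic in the present setup and no analytic subtlety intervenes; the argument is thus a direct assembly of Lemma~\ref{L:S-Sk} and Lemma~\ref{L:Sk}.
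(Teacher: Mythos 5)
Your proposal is correct and follows the paper's own route exactly: the paper's proof of Theorem~\ref{T:S-gam} consists precisely of the remark that it follows from Lemmas~\ref{L:S-Sk} and~\ref{L:Sk}, which is the assembly you carry out. Your write-up simply makes the two implications and the parameter count $(1+2\ell)m = 2\kappa+m$ explicit, which the paper leaves to the reader.
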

\begin{proof}
The proof follows from Lemmas~\ref{L:S-Sk} and~\ref{L:Sk}.
\end{proof}

In view of~(\ref{eq:S-gam}), it is clear that the function $\psi$ in~(\ref{eq:psi}) is in the space $S^-$. By the uniqueness of the function $\psi$ in~(\ref{eq:RHp-2}) and the function $h$ in~(\ref{eq:RHp2}), we conclude that the functions $\psi$ and $h$ are identical, i.e., $\psi\equiv h$ where $h$ is given by~(\ref{eq:h}).

%---------------------------------------------
\section{Solving the general conjugation problem}
%---------------------------------------------

Since the function $\psi$ in~(\ref{eq:RHp-2}) and the function $h$ given by~(\ref{eq:h}) are identical, the RH-problem~(\ref{eq:RHp-w2}) or equivalently the general conjugation problem~\eqref{eq:RHp-w} can be solved by the integral equation with the generalized Neumann kernel~(\ref{eq:ie}) as in the following theorem.

\begin{thm}\label{T:RHpw-Psi}
For any $\gamma\in H$, the boundary values of the unique solution $\Psi$ of the general conjugation problem~\eqref{eq:RHp-w} are given by
\begin{equation}\label{eq:Psi-ie}
\Psi(\eta(t))=\frac{\gamma(t)+h(t)+\i\mu(t)}{A(t)}
\end{equation}
where $A(t)$ is defined by~\eqref{eq:A}, $\mu(t)$ is the unique solution of the integral equation~\eqref{eq:ie} and the function $h(t)$ is given by~\eqref{eq:h}.
\end{thm}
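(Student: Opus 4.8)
The plan is to recognize that this statement is essentially a corollary of the machinery already assembled in Sections~4 and~5, so the proof will consist of identifying the general conjugation problem with an instance of the RH-problem already solved in Theorem~\ref{T:ie-rhp}, and then dividing out by $A$. Recall that the general conjugation problem~\eqref{eq:RHp-w} has been recast as the RH-problem~\eqref{eq:RHp-2}, namely $\Re[A\Psi]=\gamma+\psi$, with $A$ given by~\eqref{eq:A} and the unknown $\psi$ given by~\eqref{eq:psi}. The whole task is therefore to show that~\eqref{eq:RHp-2} is precisely the problem~\eqref{eq:RHp2} to which Theorem~\ref{T:ie-rhp} applies.

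First I would invoke Corollary~\ref{C:g+h}: for the given $\gamma\in H$ there is a \emph{unique} $h\in S^-$ for which $\Re[A\Psi]=\gamma+h$ is uniquely solvable, this uniqueness resting on the direct-sum decomposition $H=R^+\oplus S^-$ established in Theorem~\ref{T:H=R+S} (valid here because $\kappa_j=\ell\ge0$). The key step is then to match $\psi$ with $h$. Theorem~\ref{T:S-gam} gives the explicit trigonometric form~\eqref{eq:S-gam} of every element of $S^-$, and comparing it with the definition~\eqref{eq:psi} of $\psi$ shows that $\psi$ lies in $S^-$. Since both $\psi$ and $h$ are the unique $S^-$-component making~\eqref{eq:RHp-2} solvable, I conclude $\psi\equiv h$, so~\eqref{eq:RHp-2} is literally~\eqref{eq:RHp2}.

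With the identification in hand, I would apply Theorem~\ref{T:ie-rhp} directly: letting $\mu$ be the unique solution of the integral equation~\eqref{eq:ie}, the boundary values of the unique solution satisfy $A\Psi=\gamma+h+\i\mu$ with $h$ given by~\eqref{eq:h}. Finally, because $A(t)=e^{\i\lambda_j}e^{\i\ell t}$ from~\eqref{eq:A} is unimodular and hence never vanishes, I divide through by $A(t)$ to obtain the claimed formula~\eqref{eq:Psi-ie}.

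The genuine content of the argument—and the only place anything nontrivial happens—is the identification $\psi\equiv h$, which is what ties the ad hoc unknown constants of the conjugation problem to the canonical $S^-$-correction supplied by the integral-equation theory; everything downstream is the mechanical application of Theorem~\ref{T:ie-rhp} and a division by a nonvanishing factor. I do not expect a serious obstacle, since the existence and uniqueness of $\Psi$ are already guaranteed by Wegmann's Theorem~\ref{T:Weg} (equivalently by Theorem~\ref{T:RHp-sol}), so I need only verify that the explicit formula reproduces that solution rather than establish solvability from scratch.
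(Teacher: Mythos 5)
Your proposal is correct and follows essentially the same route as the paper, which proves this theorem implicitly through the discussion in Sections~4--6: recasting the general conjugation problem as the RH-problem~\eqref{eq:RHp-2}, using Theorem~\ref{T:S-gam} to see that $\psi\in S^-$, identifying $\psi\equiv h$ via the uniqueness in Corollary~\ref{C:g+h}, and then reading off the boundary values from Theorem~\ref{T:ie-rhp}. You correctly isolate the identification $\psi\equiv h$ as the only substantive step, exactly as the paper does.
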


To evaluate the $(2\ell+1)m$ unknown real constants $a_{j0}$, $a_{jk}$ and $b_{jk}$  in~(\ref{eq:RHp-w}), $j=1,\ldots,m$, $k=1,\ldots,\ell$, we rewrite the function $\psi_{j}(t)$ in~(\ref{eq:psi}) as
\begin{equation}\label{eq:psi2}
\psi_{j}(t)=-\sum_{k=0}^{\ell}a_{jk}\cos kt+\sum_{k=1}^{\ell}b_{jk}\sin kt, \quad j=1,2,\ldots,m.
\end{equation}
By obtaining the real function $h$ from~(\ref{eq:h}) and since $\psi=h$, we have the following theorem.

\begin{thm}\label{T:RHpw-ajbj}
The values of the $(2\ell+1)m$ unknown real constants $a_{jk}$ and $b_{jk}$in~\eqref{eq:RHp-w} are given by
\[
a_{j0}=-\frac{1}{2\pi}\int_{0}^{2\pi}h_{j}(t)dt, \quad
a_{jk}=-\frac{1}{\pi}\int_{0}^{2\pi} h_{j}(t)\cos ktdt, \quad
b_{jk}= \frac{1}{\pi}\int_{0}^{2\pi} h_{j}(t)\sin ktdt,
\]
for $j=1,\ldots,m$ and $k=1,\ldots,\ell$ where the function $h(t)$ is given by~\eqref{eq:h}.
\end{thm}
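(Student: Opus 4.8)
The plan is to exploit the identification $\psi\equiv h$ that was established at the close of the previous section, which reduces the theorem to a routine Fourier inversion. First I would recall that by the uniqueness of the decomposition in Theorem~\ref{T:H=R+S}, the function $\psi$ in~\eqref{eq:RHp-2} coincides with the function $h$ from Corollary~\ref{C:g+h} and~\eqref{eq:h}. Restricting to the boundary component $C_j$ and using the rewritten form~\eqref{eq:psi2} of $\psi_j$, I then have
\[
h_{j}(t)=-\sum_{k=0}^{\ell}a_{jk}\cos kt+\sum_{k=1}^{\ell}b_{jk}\sin kt,\qquad j=1,\ldots,m,
\]
so that each $h_j$ is a real trigonometric polynomial of degree at most $\ell$ whose coefficients are precisely the unknown constants of the general conjugation problem~\eqref{eq:RHp-w}.

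Next I would recover these constants by the standard orthogonality relations for the system $\{1,\cos kt,\sin kt\}$ on $[0,2\pi]$. Integrating $h_j$ against $1$, $\cos kt$, and $\sin kt$ respectively, and using $\int_0^{2\pi}1\,dt=2\pi$ together with $\int_0^{2\pi}\cos^2 kt\,dt=\int_0^{2\pi}\sin^2 kt\,dt=\pi$ for $k\ge1$ and the vanishing of all mixed integrals, annihilates every term except the one matched to the chosen test function. This isolates $-2\pi a_{j0}$, $-\pi a_{jk}$, and $\pi b_{jk}$, and solving for each coefficient produces exactly the three formulas claimed in the statement. The factor $1/(2\pi)$ in the formula for $a_{j0}$, as opposed to $1/\pi$ for the higher modes, is simply the reflection of $\int_0^{2\pi}1\,dt=2\pi$ against $\int_0^{2\pi}\cos^2 kt\,dt=\pi$.

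There is no genuine analytic obstacle at this stage: once $\psi=h$ is known, the result is pure Fourier bookkeeping. The only points requiring care are the sign conventions inherited from~\eqref{eq:psi}--\eqref{eq:psi2}, whereby the $a_{jk}$ enter with a minus sign and the $b_{jk}$ with a plus sign, and the correct normalization constant for the zeroth mode. The substantive work has in fact already been carried out earlier, namely the explicit description of $S^-$ in Theorem~\ref{T:S-gam}, the resulting membership $\psi\in S^-$, and the uniqueness of the splitting $H=R^+\oplus S^-$, which together force $\psi\equiv h$ with $h$ computable from the integral equation~\eqref{eq:ie} via~\eqref{eq:h}.
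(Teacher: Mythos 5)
Your proposal is correct and follows essentially the same route as the paper: the paper likewise derives the theorem by invoking the identification $\psi\equiv h$ (via Theorem~\ref{T:S-gam}, Corollary~\ref{C:g+h}, and the uniqueness of the decomposition $H=R^+\oplus S^-$), rewriting $\psi_j$ as the trigonometric polynomial~\eqref{eq:psi2}, and then reading off the coefficients, with the Fourier orthogonality step left implicit. Your sign and normalization checks match the paper's formulas exactly.
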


In the above two theorems, we have used the integral equation~(\ref{eq:ie}) to develop a method for solving the RH problem~(\ref{eq:RHp-w}). We can also use Theorem~\ref{T:S-gam} and Corollary~\ref{C:g+h} to provide alternative proof for Theorem~\ref{T:Weg} for any $\gamma\in H$.

\begin{proof}[Alternative proof of Theorem~\ref{T:Weg}]
For any integer $\ell\ge0$, let the function $A$ be given by~\eqref{eq:A}. Then for any functions $\gamma\in H$, it follows from Corollary~\ref{C:g+h} that a unique function $h\in S^-$ exists such that the RH problem
\begin{equation}\label{eq:Af}
\Re[Af]=\gamma+h
\end{equation}
is uniquely solvable. The function $h$ is given by~(\ref{eq:h}) where $\mu$ is the unique solution of the integral equation~\eqref{eq:ie}. Then Theorem~\ref{T:S-gam} implies that unique values of the $(2\ell+1)m$ real constants $\beta_{j0},\beta_{jk},\alpha_{jk}$, $j=1,2,\ldots,m$, $k=1,2,\ldots,\ell$, exist such that
\[
h_{j}(t)=\beta_{j0}+\sum_{k=1}^{\ell}\Re\left[(\beta_{jk}+\i\alpha_{jk})e^{\i k t}\right].
\]
Hence the RH problem~\eqref{eq:Af} can be written as
\begin{equation}\label{eq:Af2}
\Re\left(A_{j}f_{|j}-\beta_{j0}-\sum_{k=1}^{\ell}\Re\left[(\beta_{jk}+\i\alpha_{jk})e^{\i k t}\right]\right)=\gamma_{j},
\end{equation}
which in view of the definition~\eqref{eq:A} of the function $A$ gives the proof of Theorem~\ref{T:Weg} where $\Psi=f$ and $a_{j0}=-\beta_{j0},a_{jk}=-\beta_{jk},b_{jk}=-\alpha_{jk}$, $j=1,2,\ldots,m$, $k=1,2,\ldots,\ell$.
\end{proof}

\section{Concluding remarks}

We have presented a boundary integral method for solving the general conjugation problem which is a certain class of Riemann-Hilbert problems. The method is based on a uniquely solvable boundary integral equation with the generalized Neumann kernel. We have also presented an alternative proof of Theorem~\ref{T:Weg} which has been proved previously in~\cite{Weg01r}. The numerical implementation of the above proposed method will be presented in future works.

%% ------------------------------------------------------------------------
%
%\subsection*{Acknowledgment}
%Many thanks to our \TeX-pert for developing this class file.

% ------------------------------------------------------------------------
\end{document}